\DeclareMathOperator*{\argmin}{arg\,min}
\DeclareMathOperator*{\argmax}{arg\,max}
\def\sl{\underline{s}}
\def\su{\overline{s}}
\def\I{{\mathbb{I}}}
\def\x{{\bf x}}
\def\z{{\bf z}}
\def\bb{{\bf b}}
\def\rank{{\text{ rank}}}
\def\tr{{\text{ tr}}}
\def\diag{{\text{ diag}}}
\def\A{{\mathcal{A}}}
\newcommand{\skal}[1]{\langle #1 \rangle}
\newcommand{\reg}{\ensuremath{\mathcal{R}}}
\newtheorem{theorem}{Theorem}[section]
\newtheorem{lemma}[theorem]{Lemma}
\newtheorem{corollary}[theorem]{Corollary}
\ificcvfinal\pagestyle{empty}\fi
\begin{document}


\title{A Non-Convex  Relaxation for Fixed-Rank Approximation}
\author{Carl Olsson$^{1,2}$ \hspace{1cm} Marcus Carlsson$^1$  \hspace{1cm} Erik Bylow$^1$ \\[0.2 cm]
\begin{minipage}[c]{0.4\textwidth}
\centering
${}^1$Centre for Mathematical Sciences\\
Lund University 
\end{minipage}
\begin{minipage}[c]{0.4\textwidth}
\centering
${}^2$Department of Electrical Engineering\\
Chalmers University of Technology 
\end{minipage}
 \\[0.4cm]
}

\maketitle

\begin{abstract}
This paper considers the problem of finding a low rank matrix from observations of linear combinations of its elements.
It is well known that if the problem fulfills a restricted isometry property (RIP), convex relaxations using the nuclear norm typically work well and come with theoretical performance guarantees.
On the other hand these formulations suffer from a shrinking bias that can severely degrade the solution in the presence of noise.

In this theoretical paper we study an alternative non-convex relaxation that in contrast to the nuclear norm does not penalize the leading singular values and thereby avoids this bias. We show that despite its non-convexity the proposed formulation will in many cases have a single local minimizer if a RIP holds. Our numerical tests show that our approach typically converges to a better solution than nuclear norm based alternatives  even in cases when the RIP does not hold.
\end{abstract}

\section{Introduction}
Low rank approximation is an important tool in applications such as
rigid and non rigid structure from motion,  photometric stereo and optical flow \cite{tomasi-kanade-ijcv-1992,bregler-etal-cvpr-2000,yan-pollefeys-pami-2008,garg-etal-cvpr-2013,basri-etal-ijcv-2007,garg-etal-ijcv-2013}. 
The rank of the approximating matrix typically describes the complexity of the solution.
For example, in non-rigid structure from motion the rank measures the number of
basis elements needed to describe the point motions~\cite{bregler-etal-cvpr-2000}.
Under the assumption of Gaussian noise the objective is typically to solve
\begin{equation}
\min_{\rank(X)\leq r} \|X-X_0\|_F^2,
\end{equation}
where $X_0$ is a measurement matrix and $\|\cdot\|_F$ is the Frobenius norm.
The problem can be solved optimally using SVD \cite{eckart-young-1936},
but the strategy is limited to problems where all matrix elements are directly measured.
In this paper we will consider low rank approximation problems where linear combinations of the elements are observed.
We aim to solve problems of the form
\begin{equation}
\min_X \I(\rank(X) \leq r) + \|\A X-\bb\|^2.
\label{eq:orgprobl}
\end{equation}
Here $\I(\rank(X) \leq r)$ is $0$ if $\rank(X) \leq r$ and $\infty$ otherwise.
The linear operator $\A:\mathbb{R}^{m\times n} \rightarrow \mathbb{R}^p$ is assumed to fulfill a restricted isometry property (RIP) \cite{recht-etal-siam-2010}
\begin{equation}
(1-\delta_q)\|X\|_F^2 \leq \|\A X\|^2 \leq (1+\delta_q)\|X\|_F^2,
\label{eq:RIP}
\end{equation}
for all matrices with $\rank(X) \leq q$.
The standard approach for problems of this class is to replace the rank function with the convex nuclear norm $\|X\|_* = \sum_i \sigma_i(X)$ \cite{recht-etal-siam-2010,candes-etal-acm-2011}.
It was first observed that this is the convex envelope of the rank function over the set $\{X; \sigma_1(X) \leq 1\}$ in \cite{fazel-etal-acc-2015}.
Since then a number of generalizations that give performance guarantees for the nuclear norm relaxation have appeared, e.g. \cite{recht-etal-siam-2010,oymak2011simplified,candes-etal-acm-2011,candes2009exact}.
The approach does however suffer from a shrinking bias that can severely degrade the solution in the presence of noise. In contrast to the rank constraint the nuclear norm penalizes both small singular values of $X$, assumed to stem from measurement noise, and large singular values, assumed to make up the true signal, equally.
In some sense the suppression of noise also requires an equal suppression of signal.
Non-convex alternatives have been shown to improve performance \cite{oymak-etal-2015,mohan2010iterative}.

In this paper we will consider the relaxation
\begin{equation}
\min_X \reg_r(X) + \|\A X-\bb\|^2,
\label{eq:relaxation}
\end{equation}
where 
\begin{equation}
\reg_r(X) = \max_Z \sum_{i=r+1}^N z_i^2 - \|X-Z\|^2, 
\label{eq:regterm}
\end{equation}
and $z_i$, $i=1,...,N$ are the singular values of $X$ and $Z$ respectively.
The minimization over $Z$ does not have any closed form solution, however it was shown in \cite{larsson-olsson-ijcv-2016,andersson-etal-ol-2017} how to efficiently evaluate and compute its proximal operator.
Figure ~\ref{fig:Rg_3d} shows a three dimensional illustration of the level sets of the regularizer.
\begin{figure}[htb]
\centering
\includegraphics[width=25mm]{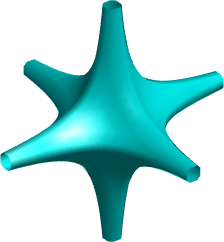}
~
\includegraphics[width=30mm]{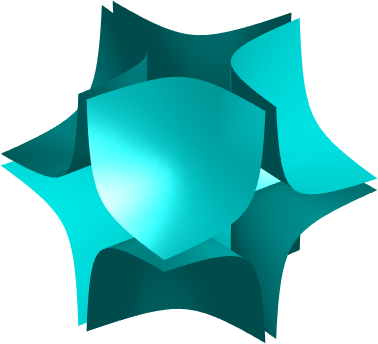}
~
\includegraphics[width=20mm]{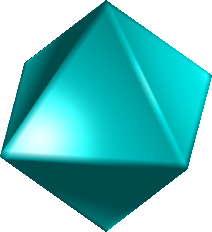}
\caption{Level set surfaces $\{ X ~|~ \reg_{r}(X) = \alpha \}$ for $X = \diag(x_1,x_2,x_3)$ with $r = 1$ (\textit{Left}) and $r = 2$ (\textit{Middle}). Note that when $r = 1$ the regularizer promotes solutions where only one of $x_k$ is non-zero. For $r = 2$ the regularlizer instead favors solutions with two non-zero $x_k$. For comparison we also include the nuclear norm. (\textit{Right})}
\label{fig:Rg_3d}
\end{figure}
In \cite{larsson-olsson-ijcv-2016,larsson-olsson-emmcvpr-2015} it was shown that 
\begin{equation}
\reg_r(X)+ \|X-X_0\|_F^2,
\label{eq:X0relax}
\end{equation}
is the the convex envelope of 
\begin{equation}
\I(\rank(X) \leq r) + \|X-X_0\|_F^2.
\label{eq:X0probl}
\end{equation}
By itself the regularization term $R_r(X)$ is not convex, but when adding a quadratic term $\|X\|_F^2$ the result is convex.
It is shown in \cite{andersson-etal-ol-2017} that \eqref{eq:X0probl} and \eqref{eq:X0relax} have the same optimizer as long as the singular values of $X_0$ are distinct. 
(When this is not the case the minimizer is not unique.)
Assuming that \eqref{eq:RIP} holds $\|\A X\|^2$ will behave roughly like $\|X\|_F$ for matrices of rank less than $q$, and therefore it seems reasonable that \eqref{eq:relaxation} should have some convexity properties.
In this paper we study the stationary points of \eqref{eq:relaxation}.
We show that if a RIP property holds it is in many cases possible to 
guarantee that any stationary point of \eqref{eq:relaxation} (with rank $r$) is unique.

A number of recent works propose to use the related $\|\cdot\|_{r*}$-norm 
\cite{mcdonald-etal-nips-2014,eriksson-etal-cvpr-2015,grussler-etal-arxiv-2016,grussler-rantzer-cdc-2015} (sometimes referred to as the spectral k-support norm). This is a generalization of the nuclear norm which is obtained when selecting $r=1$. It can be shown \cite{mcdonald-etal-nips-2014} that the extreme points of the unit ball with this norm are rank $r$ matrices. Therefore this choice may be more appropriate than the nuclear norm when searching for solutions of a particular (known) rank. It can be seen (e.g. from the derivations in \cite{grussler-etal-arxiv-2016}) that 
$\|X \|_{r*}$ is the convex envelope of \eqref{eq:X0probl} when $X_0=0$, which gives 
$\|X \|^2_{r*} = \reg_r(X) + \|X\|_F^2$. 
While the approach is convex the extra norm penalty adds a (usually) unwanted shrinking bias similar to what the nuclear norm does. In contrast, our approach avoids this bias since it uses a non-convex regularizer. Despite this non-convexity we are still able to derive strong optimality guarantees for an important class of problem instances.

\subsection{Main Results and Contributions}\label{sec:contrib}
Our main result, Theorem \ref{thm:uniqueness}, shows that if $X_s$ is a stationary point of \eqref{eq:relaxation} and the singular values $z_i$ of 
the matrix $Z = (I-\A^* A)X_s+\A^*\bb$ fulfill $z_{r+1} < (1-2\delta_{2r})z_r$ then there can not be any other stationary point with rank less than or equal to $r$. 
The matrix $Z$ is related to the gradient of the objective function at $X_s$ (see Section~\ref{sec:opt}).
The term $\|X-Z\|_F^2$ can be seen as a local approximation of $\|\A X -\bb\|^2$  close to $X_s$ (see \cite{olsson-etal-arxiv-2017}). 
If for example there is a rank $r$ matrix $X_0$ such that $\bb = \A X_0$ then it is easy to show that $X_0$ is a stationary point and the corrresponding $Z$ is identical to $X_0$. Since this means that $z_{r+1} = 0$ our results certify that this is the only stationary point to the problem if $\A$ fulfills \eqref{eq:RIP} with $\delta_{2r} < \frac{1}{2}$.
The following lemma clarifies the connection between the stationary point $X_s$ and $Z$.
\begin{lemma}\label{lemma:lowrankstatpt}
The point $X_s$ is stationary in $F(X)$ = $\reg (X) + \|\A X - \bf{b}\|^2$ if and only if $2Z \in \partial G(X_s)$, where $G(X) = \reg_r(X) + \|X\|_F^2$, and if and only if
\begin{equation}
X_s \in \argmin_X \reg_r(X) + \|X -  Z\|_F^2.
\label{eq:zprobl}
\end{equation}
\end{lemma}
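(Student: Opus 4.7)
The plan is to reduce both equivalences to a single subdifferential inclusion by working with the convex function $G$ rather than the non-convex $\reg_r$ directly. The key algebraic observation is that the definition $Z = (I - \A^*\A)X_s + \A^*\bb$ immediately gives
\[
2Z \;=\; 2X_s \;-\; 2\A^*(\A X_s - \bb),
\]
so $2Z$ combines the negated gradient of $\|\A X - \bb\|^2$ at $X_s$ with the gradient of $\|X\|_F^2$ at $X_s$.

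For the first equivalence, I would rewrite $F(X) = G(X) + \|\A X - \bb\|^2 - \|X\|_F^2$ and apply the sum rule for the (limiting) subdifferential, using that the last two terms are $C^1$. This yields
\[
\partial F(X_s) \;=\; \partial G(X_s) + 2\A^*(\A X_s - \bb) - 2X_s.
\]
Stationarity $0 \in \partial F(X_s)$ therefore holds if and only if $2X_s - 2\A^*(\A X_s - \bb) \in \partial G(X_s)$, which by the identity above is exactly $2Z \in \partial G(X_s)$.

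For the second equivalence, I would expand
\[
\reg_r(X) + \|X - Z\|_F^2 \;=\; G(X) - 2\langle X, Z\rangle + \|Z\|_F^2.
\]
Since $G$ is convex (as noted in the paragraph after \eqref{eq:X0probl}), the full objective is a convex function of $X$, and the standard first-order optimality condition for convex minimization states that $X_s$ is a minimizer if and only if $0 \in \partial G(X_s) - 2Z$, i.e.\ $2Z \in \partial G(X_s)$. This matches the condition just derived, closing the chain of equivalences.

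The only real (and minor) obstacle is justifying the subdifferential sum rule at the non-convex summand $\reg_r$; this is standard whenever the other summands are $C^1$ (see e.g.\ Rockafellar--Wets), and rewriting everything through the convex function $G$ sidesteps the issue entirely, since then only convex subdifferential calculus together with ordinary smooth gradients is needed.
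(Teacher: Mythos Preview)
Your argument is correct and matches the paper's own treatment: the paper does not spell out a proof of this lemma (it just cites Lemma~3.1 of \cite{olsson-etal-arxiv-2017}), but the derivation of the stationarity condition in Section~\ref{sec:opt} uses exactly your ``convex $G$ plus smooth remainder'' splitting, and your handling of the second equivalence via the convexity of $G(X)-2\skal{X,Z}+\|Z\|_F^2$ is the natural companion step. Your decomposition is in fact slightly cleaner than the one in Section~\ref{sec:opt}, since you avoid carrying the auxiliary $\delta_q$ term that is only needed later for the RIP-based growth estimates.
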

(The proof is identical to that of Lemma 3.1 in \cite{olsson-etal-arxiv-2017}.)
In \cite{andersson-etal-ol-2017} it is shown that as long as $z_r \neq z_{r+1}$ the unique solution of \eqref{eq:zprobl} is the best rank $r$ approximation of $Z$. When there are several singular values that are equal to $z_r$, \eqref{eq:zprobl} will have multiple solutions and some of them will not be of rank $r$.

In \cite{carlsson2016convexification} the relationship between minimizers of \eqref{eq:relaxation} and \eqref{eq:orgprobl} is studied.
We note that \cite{carlsson2016convexification} shows that if $\|\A\| \leq 1$ then any local minimizer of \eqref{eq:relaxation} is also a local minimizer of \eqref{eq:orgprobl}, and that their global minimizers coincide. In this situation local minimizers of $F$ will therefore be rank $r$ approximations of $Z$. Hence, loosely speaking our results state that in this situation any local minimum of $F$ is likely to be unique.

Our work builds on that of \cite{olsson-etal-arxiv-2017} which derives similar results for the non-convex regularizer $\mathcal{R}_\mu(X) = \sum_i \mu - \max(\sqrt{\mu}-x_i)^2$, where $x_i$ are the singular values of $X$. In this case a trade-off between rank and residual error is optimized using the formulation 
\begin{equation}
\min_X \reg_\mu (X) + \|\A X - \bb\|^2.
\label{eq:murelaxation}
\end{equation}
While it can be argued that \eqref{eq:murelaxation} and \eqref{eq:relaxation} are essentially equivalent since we can iteratively search for a $\mu$ that gives the desired rank, the results of \cite{olsson-etal-arxiv-2017} may not rule out the existence of multiple high rank stationary points. 
In contrast, when using \eqref{eq:relaxation} our results imply that if $z_{r+1} < (1-2\delta_r)z_r$ then $X_s$ is the unique stationary point of the problem. (To see this, note that if there are other stationary points we can by the preceding discussion assume that at least one is of rank $r$ or less which contradicts our main result in Theorem~\ref{thm:uniqueness}).
Hence, in this sense our results are stronger than those of \cite{olsson-etal-arxiv-2017} and allow for directly searching for a matrix of the desired rank with an essentially parameter free formulation.

\subsection{Notation}
In this section we introduce some preliminary material and notation. 
In general we will use boldface to denote a vector $\x$ and its $i$th element $x_i$.
Unless otherwise stated the singular values of a matrix $X$ will be denoted $x_i$ and the vector of singular values $\x$.
By $\|\x\|$ we denote the standard euclidean norm $\|\x\| = \sqrt{\x^T \x}$.
A diagonal matrix with diagonal elements $\x$ will be denoted $D_\x$.
For matrices we define the scalar product as $\skal{X,Y} = \tr(X^T Y)$, where $\tr$ is the trace function, and the Frobenius norm $\|X\|_F = \sqrt{\skal{X,X}}=\sqrt{\sum_{i=1}^n x_i}$.
The adjoint of the linear matrix operator $\A$ is denoted $\A^*$.
By $\partial F(X)$ we mean the set of subgradients of the function $F$ at $X$ and by a stationary point we mean a solution to 
$0 \in \partial F(X)$.

\section{Optimality Conditions}\label{sec:opt}
Let $F(X) =  \reg_r(X) + \|\A X-\bb\|^2$. We can equivalently write
\begin{equation}
F(X) = G(X) - \delta_q \|X\|_F^2 + H(X)+\|\bb\|^2,
\end{equation}
where 
$G(X) = \reg_r(X) + \|X\|_F^2$ and $H(X) = \delta_q \|X\|^2_F + \left( \|\A X \|^2 - \|X\|_2^2 \right)-2\skal{ \A X, \bb}$.
The function $G$ is convex and sub-differentiable.
Any stationary point $X_s$ of $F$ therefore has to fulfill
\begin{equation}
2 \delta_q X_s - \nabla H(X_s) \in \partial G(X_s). 
\end{equation}
Computation of the gradient gives the optimality conditions $2Z \in \partial G(X_s)$ where $Z =  (I -\A^* \A)X_s+\A^*\bb$.

\subsection{Subgradients of $G$}
For our analysis we need to determine the subdifferential $\partial G(X)$ of the function $G(X)$.
Let $\x$ be the vector of singular values of $X$ and $X=UD_\x V^T$ be the SVD.
Using Von Neumann's trace theorem it is easy to see \cite{larsson-olsson-ijcv-2016} that the $Z$ that maximizes \eqref{eq:regterm} has to be of the form
$Z = U D_\z V^T$, where $\z$ are singular values.
If we let
\begin{equation}
L(X,Z) = -\sum_{i=1}^r z_i^2 + 2\skal{Z,X},
\label{eq:Lfunk}
\end{equation}
then we have $G(X) = \max_Z L(X,Z)$. 
The function $L$ is linear in $X$ and concave in $Z$. Furthermore for any given $X$ the corresponding maximizers can be restricted to a compact set (because of the dominating quadratic term). By Danskin's Theorem, see \cite{bertsekas-1999}, the subgradients of $G$ are then given by
\begin{equation}
\partial G(X) = \text{convhull}\{\nabla_X L(X,Z), Z \in \mathcal{Z}(X)\},
\end{equation}
where $\mathcal{Z}(X)=\argmax_Z L(X,Z)$. We note that by concavity the maximizing set $\mathcal{Z}(X)$ is convex. Since $\nabla_X L(X,Z) = 2Z$ we get
\begin{equation}
\partial G(X) = 2\argmax_Z L(X,Z).
\end{equation}

To find the set of subgradients we thus need to determine all maximizers of $L$.
Since the maximizing $Z$ has the same $U$ and $V$ as $X$ what remains is to determine the singular values of $Z$.
It can be shown \cite{larsson-olsson-ijcv-2016} that these have the form
\begin{equation}
z_i \in 
\begin{cases}
\max(x_i,s) & i \leq r \\
s & i \geq r, \ x_i \neq 0 \\
[0,s] & i > r, \ x_i = 0
\end{cases}.
\label{eq:subgrad}
\end{equation}
for some number $s \geq x_r$. (The case $x_i = 0$, $i>r$ is actually not addressed in \cite{larsson-olsson-ijcv-2016}. However, it is easy to see that 
any value in $[0,s]$ works since $z_i$ vanishes from \eqref{eq:Lfunk} when $x_i = 0$, $i > r$. In fact, any value $[-s,s]$ works, but we use the convention that singular values are positive. Note that the columns of $U$ that correspond to zero singular values of $X$ are not uniquely defined. We can always achieve a decreasing sequence with $z_i \in [0,s]$ by changing signs and switching order.)

For a general matrix $X$ the value of $s$ can not be determined analytically but has to be computed numerically by 
maximizing a one dimensional concave and differentiable function \cite{larsson-olsson-ijcv-2016}.
If $\rank(X)\leq r$ it is however clear that the optimal choice is $s=x_r$.
To see this we note that since the optimal $Z$ is of the form $UD_\z V^T$ we have
\begin{equation}
L(X,Z) = -\sum_{i=1}^r (z_i-x_i)^2 + \sum_{i=1}^r x_i^2, 
\label{eq:L1}
\end{equation}
if $\rank(x) \leq r$.
Selecting $s=x_r$ and inserting \eqref{eq:subgrad} into \eqref{eq:L1} gives $L(X,Z) = \sum_{i=1}^r x_i^2$, which is clearly the maximum. 
Hence if $\rank(X) = r$ we conclude that the subgradients of $g$ are given by
$2Z = 2U D_\z V^T$ where
\begin{equation}
z_i \in 
\begin{cases}
x_i & i \leq r \\
[0,x_r] & i \geq r
\end{cases}.
\label{eq:subgrad2}
\end{equation}

\subsection{Growth estimates for the $\partial G(X)$}

Next we derive a bound on the growth of the subgradients that will be useful when considering the uniqueness of low rank stationary points. 

Let $\x$ and $\x'$ be two vectors both with at most $r$ non-zero (positive) elements,
and $I$ and $I'$ be the indexes of the $r$ largest elements of $\x$ and $\x'$ respectively.
We will assume that both $I$ and $I'$ contain $r$ elements. 
If in particular $\x' $ has fewer than $r$ non-zero elements we also include some zero elements in $I'$.
We define the corresponding sequences $\z$ and $\z'$ by
\begin{equation}
z_i \in 
\begin{cases}
x_i & i \in I\\
[0,\su] & i \notin I
\end{cases}, \quad
z'_i \in 
\begin{cases}
x'_i & i \in I'\\
[0,\su'] & i \notin I'
\end{cases},
\end{equation}
where $\su = \min_{i\in I} x_i$ and $\su' = \min_{i\in I} x'_i$.
If $\x'$ has fewer than $r$ non-zero elements then $\su'=0$.
Note that we do not require that the elements of the $\x,\x',\z$ and $\z'$ vectors are ordered in decreasing order.
We will see later (Lemma \ref{UVlemma}) that in order to estimate the effects of the $U$ and $V$ matrices we need to be able to handle permutations of the singular values.
For our analysis we will also use the quantity $\sl = \max_{i\notin I} z_i$.

\begin{lemma}\label{lemma:subgradbnd}
If $\sl < c\su$, where $0< c< 1$ then 
\begin{equation}
\skal{\z'-\z,\x'-\x} > \frac{1-c}{2}\|\x'-\x\|^2.
\end{equation}
\end{lemma}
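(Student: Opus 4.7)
The plan is to start with a direct componentwise expansion of $\langle \z' - \z, \x' - \x\rangle$ and reduce everything to two cross terms supported on the symmetric difference of $I$ and $I'$. Since $z_i = x_i$ on $I$ and $x_i = 0$ off $I$ (and symmetrically for the primes), one gets $\langle \z,\x\rangle = \|\x\|^2$, $\langle \z',\x'\rangle = \|\x'\|^2$, while the contributions to $\langle \z',\x\rangle$ and $\langle \z,\x'\rangle$ coming from $I\cap I'$ collapse to $\langle \x, \x'\rangle$ and cancel cleanly. After collecting terms one is left with
\[
\langle \z' - \z, \x' - \x\rangle \;=\; \|\x' - \x\|^2 - B - C,
\]
where $B = \sum_{i\in I\setminus I'} z'_i x_i$ and $C = \sum_{i\in I'\setminus I} z_i x'_i$. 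The lemma thereby reduces to establishing $B + C < \tfrac{1+c}{2}\|\x'-\x\|^2$.

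Next I would bound $B$ and $C$ separately using Cauchy--Schwarz together with the cardinality identity $|I\setminus I'| = |I'\setminus I| = r - |I\cap I'|$. Setting $\beta = \sum_{i\in I\setminus I'} x_i^2$ and $\gamma = \sum_{i\in I'\setminus I} (x'_i)^2$, the hypotheses give the matching lower bounds $\beta \geq |I\setminus I'|\,\su^2$ and $\gamma \geq |I'\setminus I|\,(\su')^2$. For $C$ the hypothesis $\sl < c\su$ enters directly: since $I'\setminus I \subset I^c$, $z_i \leq \sl < c\su$, and Cauchy--Schwarz combined with the lower bound on $\beta$ yields $C < c\sqrt{\beta\gamma}$.

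The step I expect to be the main obstacle is the symmetric bound on $B$, since no hypothesis constrains $\su'$ relative to $\su$. The trick is to exploit the cardinality identity in reverse: on $I\setminus I' \subset (I')^c$ we have $z'_i \leq \su'$, so $\sum_{i\in I\setminus I'}(z'_i)^2 \leq |I\setminus I'|(\su')^2 = |I'\setminus I|(\su')^2 \leq \gamma$, and Cauchy--Schwarz then delivers $B \leq \sqrt{\beta\gamma}$. Adding the two estimates gives $B+C < (1+c)\sqrt{\beta\gamma} \leq \tfrac{1+c}{2}(\beta + \gamma) \leq \tfrac{1+c}{2}\|\x'-\x\|^2$ by AM--GM, and subtracting from $\|\x'-\x\|^2$ produces the claimed bound. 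The strict inequality is supplied by $\sl < c\su$ whenever $I\ne I'$; in the degenerate case $I = I'$ with $\x\ne\x'$ one has $B = C = 0$ and the claim is immediate since $c<1$.
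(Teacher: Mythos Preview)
Your argument is correct. Both your proof and the paper's rest on the same structural observations: the reduction to the symmetric difference of $I$ and $I'$, the cardinality identity $|I\setminus I'|=|I'\setminus I|$, the key bounds $z'_i\le\su'\le x'_j$ and $z_j\le\sl<c\su\le cx_i$ for $i\in I\setminus I'$, $j\in I'\setminus I$, and a closing AM--GM step. The organization differs, however. The paper pairs each $i\in I\setminus I'$ with a $j\in I'\setminus I$ and verifies the two-term estimate
\[
x_i(x_i-z'_i)+x'_j(x'_j-z_j)\;\ge\;\tfrac{1-c}{2}\,(x_i^2+x_j'^2)
\]
directly from those bounds, summing over pairs afterward. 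You instead work in aggregate: rewrite the inner product as $\|\x'-\x\|^2-B-C$, bound $B$ and $C$ separately by Cauchy--Schwarz, and use the cardinality identity to transfer the $(\su')^2$ cap on $\sum_{I\setminus I'}(z'_i)^2$ into a bound by $\gamma$. The paper's pairwise route is slightly more elementary (no Cauchy--Schwarz needed) and makes the role of each bound visible term by term; your route avoids choosing an explicit bijection between $I\setminus I'$ and $I'\setminus I$ and packages the argument more compactly. Both deliver the same constant $\tfrac{1-c}{2}$.
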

\begin{proof}
Since $z_i = x_i$ when $i\in I$ and $x_i=0$ otherwise, we can write the inner product $\skal{\z'-\z,\x'-\x}$ as
\begin{equation}
\sum_{\footnotesize
\begin{array}{c}
i \in I \\
i \in I'\end{array}}
(x_i-x_i')^2 + 
\sum_{\footnotesize
\begin{array}{c}
i \in I \\
i \notin I'\end{array}}
x_i(x_i-z_i') + 
\sum_{\footnotesize
\begin{array}{c}
i \notin I \\
i \in I'\end{array}}
x'_i(x'_i-z_i).
\label{eq:sum1}
\end{equation}
Note that $\|\x'-\x\|^2=$
\begin{equation}
\sum_{\footnotesize
\begin{array}{c}
i \in I \\
i \in I'\end{array}}
(x_i-x_i')^2 + 
\sum_{\footnotesize
\begin{array}{c}
i \in I \\
i \notin I'\end{array}}
x_i^2 + 
\sum_{\footnotesize
\begin{array}{c}
i \notin I \\
i \in I'\end{array}}
x_{i}^{\prime 2}.
\end{equation}
Since the second and third sum in \eqref{eq:sum1} have the same number of terms it suffices to show that
\begin{equation}
x_i(x_i-z_i') + x_j'(x'_j-z_j) \geq \frac{1-c}{2}(x_i^2 + x_j^{\prime 2}),
\label{eq:twoterms}
\end{equation}
when $i\in I$, $i\notin I'$ and $j \notin I$, $j \in I'$.
By the assumption $\sl < c \su$ we know that $z_j < c x_i$. 
We further know that $z'_i \leq \su' \leq x'_j$. This gives
\begin{eqnarray}
x_i z'_i \leq x_i x_j' \leq \frac{x_i^2 + x_j^{\prime 2}}{2}, \\
x'_j z_j < c x'_j x_i \leq c \frac{x_i^2 + x_j^{\prime 2}}{2},
\end{eqnarray}
Inserting these inequalities into the left hand side of \eqref{eq:twoterms} gives the desired bound.
\end{proof}

The above result gives an estimate of the growth of the subdifferential in terms of the singular values. To derive a similar estimate for the matrix elements we need the following lemma: 

\begin{lemma}\label{UVlemma}
Let $\x$,$\x'$,$\z$,$\z'$ be fixed vectors with non-increasing and non-negative elements 
such that $\x \neq \x'$ and $\z$ and $\z'$ fulfill \eqref{eq:subgrad} (with $\x$ and $\x'$ respectively).
Define $X' = U' D_{\x'} V'^T$, $X = U D_\x V^T$, $Z' = U' D_{\z'} V'^T$, and  $Z = U D_\z V^T$ as functions of unknown orthogonal matrices $U$, $V$, $U'$ and $V'$. 
If
\begin{equation}
a^* = \min_{U,V,U',V'}\frac{\skal{Z'-Z,X'-X}}{\|X'-X\|_F^2} \leq 1
\label{eq:matrixfraction}
\end{equation}
then
\begin{equation}
a^* = \min _{M_\pi} \frac{\skal{M_\pi \z' - \z,M_\pi \x' - \x}}{\|M_\pi \x'-\x\|^2},
\end{equation}
where $M_\pi$ belongs to the set of permutation matrices.
\end{lemma}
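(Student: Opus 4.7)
The plan is to eliminate two of the four orthogonal matrices by rotational invariance, and then show that the remaining infimum over orthogonal $(U', V')$ is attained at a permutation pair $U' = V' = M_\pi$. Because $\skal{QAR^T, QBR^T} = \skal{A, B}$ and $\|QAR^T\|_F = \|A\|_F$ for any orthogonal $Q, R$, the substitution $(U, V, U', V') \mapsto (I, I, U^T U', V^T V')$ leaves both numerator and denominator unchanged, so I would work under $U = V = I$, giving $X = D_\x$ and $Z = D_\z$. In this reduced form, the choice $U' = V' = M_\pi$ makes $U' D_{\x'} V'^T = D_{M_\pi \x'}$ and $U' D_{\z'} V'^T = D_{M_\pi \z'}$, and the ratio becomes exactly $\skal{M_\pi \z' - \z, M_\pi \x' - \x} / \|M_\pi \x' - \x\|^2$, matching the right-hand side of the lemma. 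Hence the easy direction $a^* \le \min_{M_\pi} f(M_\pi, M_\pi)$ is immediate since permutation matrices are a subset of orthogonal matrices.

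For the reverse inequality, expand the numerator using bilinearity. The terms $\skal{D_{\z'}, D_{\x'}}$ and $\skal{D_\z, D_\x}$ are independent of $(U', V')$, while the remaining three cross terms $\skal{U' D_{\x'} V'^T, D_\x}$, $\skal{U' D_{\z'} V'^T, D_\x}$, and $\skal{U' D_{\x'} V'^T, D_\z}$ all admit the entry-wise form $\sum_{i,k} e_{ik}\, U'_{ik} V'_{ik}$ for appropriate coefficients. Therefore both numerator and denominator depend on $(U', V')$ only through the Hadamard product $C := U' \odot V'$, and the optimality condition $\skal{Z' - Z, X' - X} - a^* \|X' - X\|_F^2 \ge 0$ can be rewritten as a single linear inequality $\skal{E_{a^*}, C} \le c$, with explicit rank-two coefficient matrix $E_{a^*}$ built from $\x, \x', \z, \z'$ and constant $c$. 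The task is then to show this linear inequality is tight at some permutation matrix $C = M_\pi$.

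The main obstacle is that the feasible set $\mathcal{C} = \{U' \odot V' : U', V' \text{ orthogonal}\}$ is not convex, so one cannot invoke Birkhoff's theorem directly. My plan is to apply Von Neumann's trace inequality to the two coupled cross terms $\skal{U' D_{\z'} V'^T, D_\x}$ and $\skal{U' D_{\x'} V'^T, D_\z}$ simultaneously, using the rank-two structure of $E_{a^*}$ to show that any maximizer must align the singular basis of $U'_* D_{\x'} V'^{*T}$ with the standard basis — forcing $U'_*$ and $V'_*$ to be signed permutation matrices. The assumption $a^* \le 1$ enters precisely here: it controls the sign pattern of $E_{a^*}$ so that the maximizing signed permutation must in fact be unsigned, matching the form $U' = V' = M_\pi$. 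Combining this with the first step yields $a^* \ge \min_{M_\pi} f(M_\pi, M_\pi)$ and completes the proof.
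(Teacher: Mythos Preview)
Your reduction to $U=V=I$, the identification of the Hadamard-product variable $C=U'\odot V'$, and the observation that both numerator and denominator of the ratio are affine in $C$ are all correct and match the intended route. You are also right that the hypothesis $a^*\le 1$ together with $z_i\ge x_i\ge 0$ (which is precisely the property the paper singles out) forces the coefficient matrix to have a sign: writing $(-E_{a^*})_{ij}=x_i z'_j+z_i x'_j-2a^*x_i x'_j$, one gets $x_i z'_j+z_i x'_j\ge 2x_i x'_j\ge 2a^*x_i x'_j$, so $-E_{a^*}\ge 0$ entrywise.

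The gap is the mechanism you propose for the hard direction. Applying Von~Neumann's trace inequality ``simultaneously'' to the two cross terms $\skal{U'D_{\z'}V'^T,D_\x}$ and $\skal{U'D_{\x'}V'^T,D_\z}$ is not a well-defined operation: Von~Neumann bounds one trace at a time, and the optimal alignments for the two terms generally disagree, so the sum of the two separate bounds is strictly larger than the true maximum over $(U',V')$. A small $2\times 2$ example already exhibits this. Consequently your claim that the maximizer must be a signed permutation does not follow from Von~Neumann, and the rank-two structure of $E_{a^*}$ is a red herring---it is not used in the argument that actually closes the proof.

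What does work, and what the paper's remark about $z_i\ge x_i\ge 0$ points to, is a Birkhoff argument rather than a Von~Neumann one. Since $-E_{a^*}\ge 0$, for every feasible $C=U'\odot V'$ one has $\skal{-E_{a^*},C}\le \skal{-E_{a^*},|C|}$. By Cauchy--Schwarz on each row and column, $|C|$ is doubly \emph{sub}stochastic: $\sum_j |U'_{ij}V'_{ij}|\le 1$ and $\sum_i |U'_{ij}V'_{ij}|\le 1$. The maximum of a nonnegative linear functional over the doubly substochastic polytope is attained at a (full) permutation matrix, and permutations are feasible via $U'=V'=M_\pi$. Hence $\max_{C\in\mathcal{C}}\skal{-E_{a^*},C}=\max_{M_\pi}\skal{-E_{a^*},M_\pi}$, which combined with the linear-fractional reformulation at level $a^*$ gives the missing inequality $a^*\ge\min_{M_\pi}f(M_\pi,M_\pi)$. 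Replace your Von~Neumann step with this argument and the proof goes through.
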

The proof is almost identical to that of Lemma 4.1 in \cite{olsson-etal-arxiv-2017} and therefore we omit it.
While our subdifferential is different to the one studied in \cite{olsson-etal-arxiv-2017}, for both of them we have that the $\z$ and $\x$ vectors fulfill $z_i \geq x_i \geq 0$ which is the only property that is used in the proof.

\begin{corollary}\label{cor:subgradestim}
Assume that $X$ is of rank $r$ and $2Z \in \partial G(X)$. 
If the singular values of the matrix $Z$ fulfill $z_{r+1} < cz_{r}$, where $0 < c < 1$, 
then for any $2Z' \in \partial G(X')$ with $\rank(X') \leq r$ we have 
\begin{equation}
\skal{Z'-Z,X'-X} >  \frac{1-c}{2} \|X'-X\|_F^2,
\end{equation}
as long as $\|X'-X\|_F \neq 0$.
\end{corollary}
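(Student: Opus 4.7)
The plan is to reduce the matrix inequality to a vector inequality in two stages: first use Lemma~\ref{UVlemma} to exchange the matrix inner product for a minimum over permutations of singular values, then apply Lemma~\ref{lemma:subgradbnd} to each permutation to produce the numerical bound $\tfrac{1-c}{2}$.

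First I would translate the hypothesis into the setup of Lemma~\ref{lemma:subgradbnd}. Since $X$ has rank exactly $r$, its singular value vector $\x$ has top-$r$ index set $I = \{1,\dots,r\}$, so $\overline{s} = x_r$. By \eqref{eq:subgrad2} the subgradient $Z$ satisfies $z_i = x_i$ for $i\leq r$ and $z_i \in [0,x_r]$ for $i>r$, hence the sorted $(r+1)$-st singular value $z_{r+1}$ of $Z$ equals $\underline{s} = \max_{i\notin I} z_i$ and $z_r = \overline{s}$. The hypothesis $z_{r+1} < c\, z_r$ thus reads $\underline{s} < c\,\overline{s}$. For $X'$ of rank $\leq r$ with $2Z' \in \partial G(X')$, the same characterization applies to $(\x', \z')$.

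Next I would consider the ratio $\rho = \langle Z'-Z, X'-X\rangle/\|X'-X\|_F^2$, splitting on whether $\rho > 1$ or $\rho \leq 1$. In the first case $\rho > 1 > \tfrac{1-c}{2}$ and we are done. In the second case, sort $\x$ and $\x'$ in non-increasing order and note that $\x \neq \x'$: otherwise $\x=\x'$ would force $\|X'-X\|_F^2 = \|X\|_F^2 + \|X'\|_F^2 - 2\langle X,X'\rangle \geq 2(\|\x\|^2 - \langle \x,\x\rangle) = 0$ only if singular vectors align (and the generic case is handled the same way through the Von Neumann argument underlying Lemma~\ref{UVlemma}; in the degenerate overlap one takes limits). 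Lemma~\ref{UVlemma} then gives
\begin{equation}
\rho \;\geq\; a^* \;=\; \min_{M_\pi}\frac{\langle M_\pi \z' - \z,\; M_\pi \x' - \x\rangle}{\|M_\pi \x' - \x\|^2},
\end{equation}
where $M_\pi$ ranges over permutation matrices.

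Finally, the key observation is that for any permutation $M_\pi$, the pair $(\x, M_\pi \x')$ with subgradients $(\z, M_\pi \z')$ still satisfies the hypothesis of Lemma~\ref{lemma:subgradbnd}: the quantities $\overline{s} = \min_{i\in I} x_i$ and $\underline{s} = \max_{i\notin I} z_i$ involve only $\x$ and $\z$ (unchanged by $M_\pi$), while $M_\pi \z'$ satisfies the subgradient structure \eqref{eq:subgrad} with respect to $M_\pi \x'$ because permuting both vectors simultaneously preserves the top-$r$ index set relation. Hence Lemma~\ref{lemma:subgradbnd} gives, for every $\pi$,
\begin{equation}
\langle M_\pi \z' - \z,\; M_\pi \x' - \x\rangle \;>\; \tfrac{1-c}{2}\,\|M_\pi \x' - \x\|^2,
\end{equation}
so $a^* > \tfrac{1-c}{2}$ and therefore $\rho > \tfrac{1-c}{2}$, which is the claimed bound. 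The main technical obstacle is the permutation bookkeeping in the last step: I must verify that the index-set structure required by Lemma~\ref{lemma:subgradbnd} is invariant under the permutations introduced by Lemma~\ref{UVlemma}, which is where the observation that $\overline{s},\underline{s}$ depend only on the unpermuted $(\x,\z)$ is essential.
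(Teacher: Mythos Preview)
Your reduction via Lemma~\ref{UVlemma} and Lemma~\ref{lemma:subgradbnd} is the same route the paper takes, and the permutation bookkeeping in your final step is correct. The gap is your treatment of the case $\x=\x'$. You cannot argue that $\x=\x'$ forces $X=X'$: two matrices can share all singular values and still differ (different $U,V$). Lemma~\ref{UVlemma} explicitly requires $\x\neq\x'$, so this case must be handled separately, and your parenthetical ``in the degenerate overlap one takes limits'' does not close it. Passing to a limit of strict inequalities yields only $\geq \tfrac{1-c}{2}$, whereas the corollary claims a strict inequality.

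The paper fixes this with an $\epsilon$-slack at the outset: from $z_{r+1}<cz_r$ one first picks $\epsilon>0$ with $z_{r+1}<(c-\epsilon)z_r$ and $0<c-\epsilon<1$, then runs exactly your argument to obtain the strict bound $\tfrac{1-c+\epsilon}{2}$ whenever $\x'\neq\x$. For $\x'=\x$ one perturbs $X'$ by bumping its largest singular value, $\sigma_1(X'(t))=\sigma_1(X')+t$, so that $\x'(t)\neq\x$ for $t>0$ and the strict bound applies; sending $t\to 0$ gives $\skal{Z'-Z,X'-X}\geq \tfrac{1-c+\epsilon}{2}\|X'-X\|_F^2$, which is still strictly larger than $\tfrac{1-c}{2}\|X'-X\|_F^2$. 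Without this $\epsilon$-room your limiting step cannot recover the strict inequality.
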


\begin{proof}
We let $\x,\x',\z$ and $\z'$ be the singular values of the matrices $X,X',Z$ and $Z'$ respectively.
Our proof essentially follows that of Corollary 4.2 in \cite{olsson-etal-arxiv-2017}, where
a similar result is first proven under the assumption that $\x \neq \x'$ and then generalized to the general case using a continuity argument. 
For this purpose we need to extend the infeasible interval somewhat.
Since $0< c < 1$ and $z_{r+1} < c z_r$ are open there is an 
$\epsilon > 0$ such that $z_{r+1} < (c-\epsilon) z_r$ and $0 < c-\epsilon < 1$.
Now assume that $a^* > 1$ in \eqref{eq:matrixfraction}, then clearly
\begin{equation}
\skal{Z'-Z,X'-X} 
> \frac{1-c+\epsilon}{2} \|X'-X\|_F^2,
\label{eq:epsilonbound}
\end{equation}
since $ \frac{1-c+\epsilon}{2}  < 1$. 
Otherwise $a^* \leq 1$ and we have
\begin{equation}
\frac{\skal{Z'-Z,X'-X}}{\|X'-X\|_F^2} \geq  \frac{\skal{M_{\pi} \z' - \z,M_{\pi} \x' - \x}}{\|M_{\pi} \x' - \x\|^2}.
\end{equation}
According to Lemma \ref{lemma:subgradbnd} the right hand side is strictly larger than $ \frac{1-c+\epsilon}{2}$, 
which proves that \eqref{eq:epsilonbound} holds for all $X'$ with $\x' \neq \x$.

It remains to show that 
\begin{equation}
\skal{Z'-Z,X'-X}
\geq  \frac{1-c+\epsilon}{2} \|X'-X\|_F^2,
\label{eq:epsilonbound2}
\end{equation}
for the case $\x' = \x$ and $\|X'-X\|_F \neq 0$.
Since $\epsilon > 0$ is arbitrary this proves the Corollary.
This can be done as in \cite{olsson-etal-arxiv-2017} using continuity of the scalar product and the Frobenius norm. Specifically, a sequence $X(t) \rightarrow X$, when $t \rightarrow 0$, is defined by modifying the largest singular value and letting $\sigma_1(X(t)) = \sigma_1(X) + t$.
It is easy to verify that $X(t)$ fulfills \eqref{eq:epsilonbound} for every $t > 0$. Letting $t \rightarrow 0$ then proves \eqref{eq:epsilonbound2}.
\end{proof}

\subsection{Uniqueness of Low Rank Stationary Points}
In this section we show that if a RIP \eqref{eq:RIP} holds and the singular values $z_r$ and $z_{r+1}$ are well separated there can only be one stationary point of $F$ that has rank $r$.
We first derive a bound on the gradients of $H$.
We have
\begin{equation}
\nabla H(X) = 2 \delta_q X + 2(\A^* \A - I)X-2 \A^* \bb.
\end{equation}
This gives 
$
\skal{\nabla H(X')-\nabla H(X), X'-X} =
$
\begin{equation}
2 \delta_q \|X'-X\|_F^2 + 2(\|\A (X'- X)\|^2-\|X'-X\|_F^2).
\end{equation}
By \eqref{eq:RIP} 
$\left| \|\A (X'- X)\|^2-\|X'-X\|_F^2 \right| \leq \delta \|X'-X\|_F^2 ,
$ if $\rank(X'-X) \leq q$ 
which gives 
\begin{equation}
\skal{\nabla H(X')-\nabla H(X), X'-X} \geq 0
\label{eq:Hest}
\end{equation}
This leads us to our main result
\begin{theorem}\label{thm:uniqueness}
Assume that $X_s$ is a stationary point of $F$, that is, $(I-\A^*\A)X_s + \A^* \bb = Z$,  where $2Z \in \partial G(X_s)$, $\rank(X_s) = r$ and the singular values of $Z$ fulfill $z_{r+1} < (1-2\delta_{2r}) z_r $. If $X'_s$ is another stationary point then $\text{rank}(X'_s) > r$.
\end{theorem}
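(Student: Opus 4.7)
The plan is to argue by contradiction: suppose $X_s'$ is a second stationary point with $\rank(X_s')\leq r$ and $X_s'\neq X_s$, and derive conflicting upper and lower bounds on the quantity $\langle Z'-Z,\,X_s'-X_s\rangle$, where $Z' = (I-\A^*\A)X_s'+\A^*\bb$. By the optimality characterization derived in Section~\ref{sec:opt}, stationarity of $X_s'$ is equivalent to $2Z'\in\partial G(X_s')$, so both subgradient relations $2Z\in\partial G(X_s)$ and $2Z'\in\partial G(X_s')$ are available.

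First I would compute the linear combination directly: since $Z'-Z = (I-\A^*\A)(X_s'-X_s)$, expanding the inner product yields
\begin{equation}
\langle Z'-Z,\,X_s'-X_s\rangle = \|X_s'-X_s\|_F^2 - \|\A(X_s'-X_s)\|^2.
\end{equation}
Because $\rank(X_s)=r$ and $\rank(X_s')\leq r$, the difference has rank at most $2r$, so the RIP \eqref{eq:RIP} with $q=2r$ applies and gives the lower bound $\|\A(X_s'-X_s)\|^2 \geq (1-\delta_{2r})\|X_s'-X_s\|_F^2$. This yields the upper estimate
\begin{equation}
\langle Z'-Z,\,X_s'-X_s\rangle \leq \delta_{2r}\|X_s'-X_s\|_F^2.
\end{equation}

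Second, I would apply Corollary~\ref{cor:subgradestim} to obtain a matching strict lower bound. The hypotheses align perfectly: $X_s$ has rank $r$, $2Z\in\partial G(X_s)$, the ratio condition $z_{r+1} < c\, z_r$ holds with $c := 1-2\delta_{2r}\in(0,1)$, and $X_s'$ is another rank $\leq r$ matrix with $2Z'\in\partial G(X_s')$. Assuming $\|X_s'-X_s\|_F\neq 0$, the corollary gives
\begin{equation}
\langle Z'-Z,\,X_s'-X_s\rangle > \tfrac{1-c}{2}\|X_s'-X_s\|_F^2 = \delta_{2r}\|X_s'-X_s\|_F^2,
\end{equation}
which contradicts the previous inequality. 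Hence no such $X_s'$ can exist, and the claim $\rank(X_s')>r$ follows.

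I do not expect any real obstacle here, since essentially all the work has been pushed into the growth estimate of Corollary~\ref{cor:subgradestim} and into the monotonicity bound \eqref{eq:Hest} on $\nabla H$. The only subtle point to be careful about is that the strict inequality from the corollary is what allows the argument to close against the non-strict RIP bound; one needs to confirm that $c = 1-2\delta_{2r}$ indeed lies strictly in $(0,1)$, which is implicit in the standard RIP regime $\delta_{2r}<1/2$ required for the hypothesis $z_{r+1}<(1-2\delta_{2r})z_r$ to be nontrivial (otherwise the threshold would be non-positive and the assumption vacuous). With this in place the contradiction is immediate.
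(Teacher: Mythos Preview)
Your proof is correct and follows essentially the same argument as the paper: a contradiction between the RIP-based upper bound on $\langle Z'-Z,\,X_s'-X_s\rangle$ and the strict lower bound from Corollary~\ref{cor:subgradestim} with $c=1-2\delta_{2r}$. The only cosmetic difference is that the paper routes the upper bound through the decomposition $F=G-\delta_{2r}\|\cdot\|_F^2+H+\|\bb\|^2$ and the monotonicity estimate \eqref{eq:Hest} on $\nabla H$, whereas you compute $\langle (I-\A^*\A)V,\,V\rangle=\|V\|_F^2-\|\A V\|^2$ directly; both reduce to the same lower RIP inequality on $\|\A V\|^2$.
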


\begin{proof}
Assume that $\rank(X'_s) \leq r$. Since both $X_s$ and $X'_s$ are stationary we have
\begin{eqnarray}
2 \delta_{2r}X'_s - \nabla H(X'_s) = 2 Z', \\
2 \delta_{2r}X_s - \nabla H(X_s) = 2 Z, 
\end{eqnarray}
where $2Z \in \partial G(X_s)$ and $2Z' \in \partial G(X'_s)$.
Taking the difference between the two equations yields
\begin{equation}
2 \delta_{2r}(X'_s-X_s) - \nabla H(X'_s) + \nabla H(X_s)= 2Z'-2Z,
\end{equation}
which implies
\begin{equation}
2 \delta_{2r}\|V\|_F^2 - \skal{\nabla H(X'_s) + \nabla H(X_s),V}= 2\skal{Z'-Z,V},
\end{equation}
where $V=X'_s-X_s$ has $\rank(V)\leq 2r$.
By \eqref{eq:Hest} the left hand side is less than $2\delta_{2r}\|V\|_F^2$.
However, according to Corollary~\ref{cor:subgradestim} (with $c=1-2\delta_{2r}$) the right hand side is larger than $2\delta_{2r}\|V\|_F^2$ which contradicts $\rank(X'_s) \leq r$.
\end{proof}

\paragraph{Remark.} Note that \cite{carlsson2016convexification} shows that if $\|\A\| \leq 1$ then any local minimizer of \eqref{eq:relaxation} is also a local minimizer of \eqref{eq:orgprobl} and therefore of rank $r$. Hence any local minimizer $X_s$ obeying the conditions of the theorem will be unique.

\section{Implementation and Experiments}

In this section we test the proposed approach on some simple real and synthetic applications (some that fulfill \eqref{eq:RIP} and some that do not). For our implementation we use the GIST approach from \cite{gong-etal-2013} because of its simplicity.
Given a current iterate $X_k$ this method solves
\begin{equation}
X_{k+1} = \argmin_X \reg_r(X) + \tau_k \left\|X- M_k \right\|^2,
\label{eq:tauapprox}
\end{equation}
where $M_k =X_k -\frac{1}{\tau_k}(\A^* \A X_k- A^* \bb) $.
Note that if $\tau_k=1$ then any fixed point of \eqref{eq:tauapprox} is a stationary point by Lemma \ref{lemma:lowrankstatpt}.
To solve \eqref{eq:tauapprox} we use the proximal operator computed in \cite{larsson-olsson-ijcv-2016}.

Our algorithm consists of repeatedly solving \eqref{eq:tauapprox} for a sequence of $\{\tau_k\}$. 
We start from a larger value ($\tau_0 = 5$ in our implementation) and reduce towards $1$ as long as this results in decreasing objective values. Specifically we set $\tau_{k+1} = \frac{\tau_k-1}{1.1} + 1$ if the previous step was successful in reducing the objective value. Otherwise we increase $\tau$ according to $\tau_{k+1} = 1.5(\tau_k-1) + 1$.

\subsection{Synthetic Data}

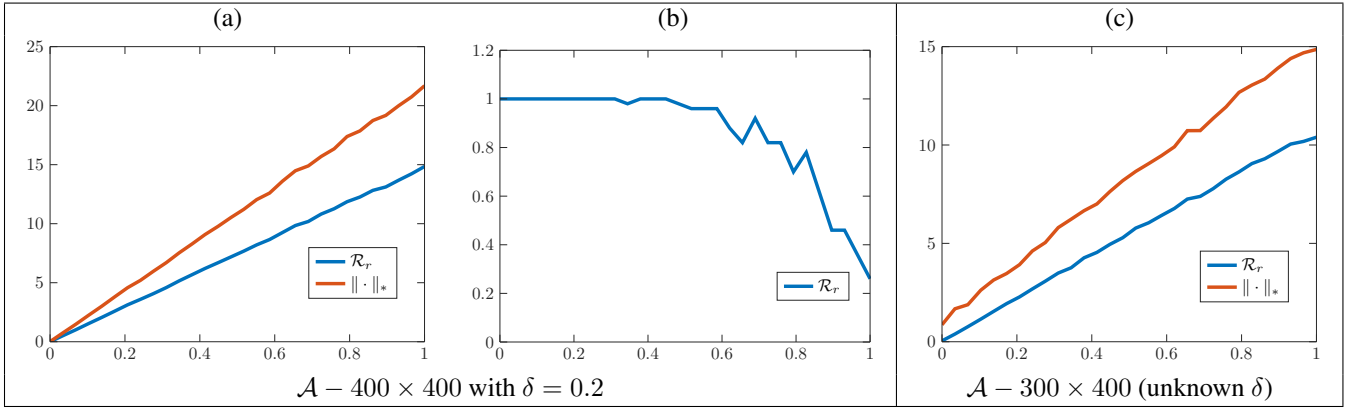
\begin{figure*}
\begin{center}
\def\w{55mm}
\begin{tabular}{|cc|c|}
\hline
(a) & (b) & (c) \\
\resizebox{\w}{!}{
%
%
\definecolor{mycolor1}{rgb}{0.00000,0.44700,0.74100}%
\definecolor{mycolor2}{rgb}{0.85000,0.32500,0.09800}%
\begin{tikzpicture}

\begin{axis}[%
width=76.074mm,
height=60mm,
at={(0mm,0mm)},
scale only axis,
separate axis lines,
every outer x axis line/.append style={white!15!black},
every x tick label/.append style={font=\color{white!15!black}},
every x tick/.append style={white!15!black},
xmin=0,
xmax=1,
xtick={0,0.2,0.4,0.6,0.8,1},
every outer y axis line/.append style={white!15!black},
every y tick label/.append style={font=\color{white!15!black}},
every y tick/.append style={white!15!black},
ymin=0,
ymax=25,
ytick={0,5,10,15,20,25},
axis background/.style={fill=white},
axis on top,
legend style={at={(0.69,0.146)}, anchor=south west, legend cell align=left, align=left, draw=white!15!black}
]
\addplot [color=mycolor1, line width=2.0pt]
  table[row sep=crcr]{%
0	7.81750937991285e-10\\
0.0344827586206897	0.515146930247986\\
0.0689655172413793	1.017437646058\\
0.103448275862069	1.54580047974168\\
0.137931034482759	2.05832443568454\\
0.172413793103448	2.5876262564089\\
0.206896551724138	3.12466210655402\\
0.241379310344828	3.59101887759389\\
0.275862068965517	4.073905826463\\
0.310344827586207	4.58786070200464\\
0.344827586206897	5.15766951333924\\
0.379310344827586	5.68135311581031\\
0.413793103448276	6.20648606494838\\
0.448275862068966	6.68962785351058\\
0.482758620689655	7.18191483417363\\
0.517241379310345	7.6763137048244\\
0.551724137931034	8.20536073091588\\
0.586206896551724	8.65735486315605\\
0.620689655172414	9.25269969482641\\
0.655172413793103	9.84461454783622\\
0.689655172413793	10.187324392088\\
0.724137931034483	10.8177609097617\\
0.758620689655172	11.2660596979874\\
0.793103448275862	11.8608404069798\\
0.827586206896552	12.2603454410226\\
0.862068965517241	12.8230405684574\\
0.896551724137931	13.1125638298326\\
0.931034482758621	13.6848677420051\\
0.96551724137931	14.229574567012\\
1	14.8454120949889\\
};
\addlegendentry{$\mathcal{R}_r$}

\addplot [color=mycolor2, line width=2.0pt]
  table[row sep=crcr]{%
0	0.0109870811192978\\
0.0344827586206897	0.756164946492904\\
0.0689655172413793	1.48654814039559\\
0.103448275862069	2.25540872253526\\
0.137931034482759	3.00627034569975\\
0.172413793103448	3.79817163604483\\
0.206896551724138	4.57596835956441\\
0.241379310344828	5.20117720037665\\
0.275862068965517	5.96921001384714\\
0.310344827586207	6.70098748453494\\
0.344827586206897	7.53309404559638\\
0.379310344827586	8.28686568830322\\
0.413793103448276	9.0796820381963\\
0.448275862068966	9.76814667716797\\
0.482758620689655	10.517398495915\\
0.517241379310345	11.213735215143\\
0.551724137931034	12.0402394464399\\
0.586206896551724	12.5952366744354\\
0.620689655172414	13.6038447255171\\
0.655172413793103	14.4848929386454\\
0.689655172413793	14.8961171783694\\
0.724137931034483	15.6974208158601\\
0.758620689655172	16.3406867238096\\
0.793103448275862	17.3845332471011\\
0.827586206896552	17.8613959166299\\
0.862068965517241	18.7410409251588\\
0.896551724137931	19.1726941073259\\
0.931034482758621	19.9971183870983\\
0.96551724137931	20.7469000156798\\
1	21.6832539706736\\
};
\addlegendentry{$\|\cdot\|_*$}

\end{axis}
\end{tikzpicture}
\resizebox{\w}{!}{
%
%
\definecolor{mycolor1}{rgb}{0.00000,0.44700,0.74100}%
\begin{tikzpicture}

\begin{axis}[%
width=76.074mm,
height=60mm,
at={(0mm,0mm)},
scale only axis,
separate axis lines,
every outer x axis line/.append style={white!15!black},
every x tick label/.append style={font=\color{white!15!black}},
every x tick/.append style={white!15!black},
xmin=0,
xmax=1,
xtick={0,0.2,0.4,0.6,0.8,1},
every outer y axis line/.append style={white!15!black},
every y tick label/.append style={font=\color{white!15!black}},
every y tick/.append style={white!15!black},
ymin=0,
ymax=1.2,
ytick={0,0.2,0.4,0.6,0.8,1,1.2},
axis background/.style={fill=white},
axis on top,
legend style={at={(0.748,0.142)}, anchor=south west, legend cell align=left, align=left, draw=white!15!black}
]
\addplot [color=mycolor1, line width=2.0pt]
  table[row sep=crcr]{%
0	1\\
0.0344827586206897	1\\
0.0689655172413793	1\\
0.103448275862069	1\\
0.137931034482759	1\\
0.172413793103448	1\\
0.206896551724138	1\\
0.241379310344828	1\\
0.275862068965517	1\\
0.310344827586207	1\\
0.344827586206897	0.980000000000001\\
0.379310344827586	1\\
0.413793103448276	1\\
0.448275862068966	1\\
0.482758620689655	0.980000000000001\\
0.517241379310345	0.960000000000001\\
0.551724137931034	0.960000000000001\\
0.586206896551724	0.960000000000001\\
0.620689655172414	0.88\\
0.655172413793103	0.82\\
0.689655172413793	0.92\\
0.724137931034483	0.82\\
0.758620689655172	0.82\\
0.793103448275862	0.7\\
0.827586206896552	0.78\\
0.862068965517241	0.62\\
0.896551724137931	0.46\\
0.931034482758621	0.46\\
0.96551724137931	0.36\\
1	0.26\\
};
\addlegendentry{$\mathcal{R}_r$}

\end{axis}
\end{tikzpicture}
\resizebox{\w}{!}{
%
%
\definecolor{mycolor1}{rgb}{0.00000,0.44700,0.74100}%
\definecolor{mycolor2}{rgb}{0.85000,0.32500,0.09800}%
\begin{tikzpicture}

\begin{axis}[%
width=76.074mm,
height=60mm,
at={(0mm,0mm)},
scale only axis,
separate axis lines,
every outer x axis line/.append style={white!15!black},
every x tick label/.append style={font=\color{white!15!black}},
every x tick/.append style={white!15!black},
xmin=0,
xmax=1,
xtick={0,0.2,0.4,0.6,0.8,1},
every outer y axis line/.append style={white!15!black},
every y tick label/.append style={font=\color{white!15!black}},
every y tick/.append style={white!15!black},
ymin=0,
ymax=15,
ytick={0,5,10,15},
axis background/.style={fill=white},
axis on top,
legend style={at={(0.689,0.134)}, anchor=south west, legend cell align=left, align=left, draw=white!15!black}
]
\addplot [color=mycolor1, line width=2.0pt]
  table[row sep=crcr]{%
0	0.0452663477312814\\
0.0344827586206897	0.389690173906979\\
0.0689655172413793	0.763713500930524\\
0.103448275862069	1.1467647712505\\
0.137931034482759	1.54373273834883\\
0.172413793103448	1.93630536268886\\
0.206896551724138	2.2763467050987\\
0.241379310344828	2.6841501110039\\
0.275862068965517	3.07743032489103\\
0.310344827586207	3.49104124142147\\
0.344827586206897	3.75937513403615\\
0.379310344827586	4.26598864804995\\
0.413793103448276	4.54245722713319\\
0.448275862068966	4.94805512525553\\
0.482758620689655	5.28750084159965\\
0.517241379310345	5.77628028894524\\
0.551724137931034	6.04494222002813\\
0.586206896551724	6.41799560793692\\
0.620689655172414	6.7752411220274\\
0.655172413793103	7.25139075452673\\
0.689655172413793	7.38735271193045\\
0.724137931034483	7.78070346063014\\
0.758620689655172	8.2626726281706\\
0.793103448275862	8.63281442311346\\
0.827586206896552	9.05565628142579\\
0.862068965517241	9.30001266828098\\
0.896551724137931	9.66573692656112\\
0.931034482758621	10.0469955543114\\
0.96551724137931	10.1820418805794\\
1	10.3952948669598\\
};
\addlegendentry{$\mathcal{R}_r$}

\addplot [color=mycolor2, line width=2.0pt]
  table[row sep=crcr]{%
0	0.861421534997627\\
0.0344827586206897	1.67399343211803\\
0.0689655172413793	1.88233714359888\\
0.103448275862069	2.61859468194325\\
0.137931034482759	3.13799272877185\\
0.172413793103448	3.46682896876677\\
0.206896551724138	3.9164604072331\\
0.241379310344828	4.61390140320733\\
0.275862068965517	5.04159440053224\\
0.310344827586207	5.8012205665609\\
0.344827586206897	6.22690857864102\\
0.379310344827586	6.66061911182194\\
0.413793103448276	7.00506677782715\\
0.448275862068966	7.63181456614726\\
0.482758620689655	8.18946385855913\\
0.517241379310345	8.65613229404404\\
0.551724137931034	9.05138461750967\\
0.586206896551724	9.45849185990227\\
0.620689655172414	9.90781283255675\\
0.655172413793103	10.7291710598663\\
0.689655172413793	10.732395375938\\
0.724137931034483	11.3534200189758\\
0.758620689655172	11.9368177545908\\
0.793103448275862	12.6746818616958\\
0.827586206896552	13.0454480380315\\
0.862068965517241	13.352408743758\\
0.896551724137931	13.9012469133356\\
0.931034482758621	14.3931870399119\\
0.96551724137931	14.6860404669176\\
1	14.8589286193545\\
};
\addlegendentry{$\| \cdot \|_*$}

\end{axis}
\end{tikzpicture}
\multicolumn{2}{|c|}{$\A - 400 \times 400$ with $\delta = 0.2$} & 
$\A- 300 \times 400$ (unknown $\delta$)\\
\hline
\end{tabular}
\end{center}
\caption{(a) - Noise level (x-axis) vs. data fit $\|\A X- \bb\| $ (y-axis) for the solutions obtained with \eqref{eq:relaxation} and \eqref{eq:nuclearrelax}.
(b) - Fraction of instances where the solution of \eqref{eq:relaxation} could be verified to be globally optimal.
(c) - Same as (a). (a) and (b) uses $400 \times 400$ $\A$ with $\delta = 0.2$ while (c)  uses $300 \times 400$ $\A$. }
\label{fig:syntres}
\end{figure*}

We first evaluate the quality of the relaxation on a number of synthetic experiments.
We compare the two formulations \eqref{eq:relaxation} and 
\begin{equation}
\min_X \mu \|X\|_* +\|\A X -\bb\|^2.
\label{eq:nuclearrelax}
\end{equation}

In Figure~\ref{fig:syntres} (a) we tested these two relaxations on a number of synthetic problems with varying noise levels. 
The data was created so that the operator $\A$ fulfills \eqref{eq:RIP} with $\delta=0.2$.
By column stacking an $m\times n$ matrix $X$ the linear mapping $\A$ can be represented with a matrix $A$ of size $p\times mn$. It is easy to see that if we let $p=mn$ the term $(1-\delta_q)$ of \eqref{eq:RIP} will  be the same as the smallest singular value of $A$ squared.
(In this case the RIP constraint will hold for any rank and we therefore suppress the subscript $q$ and only use $\delta$.)
For the data in Figure~\ref{eq:RIP} (a) we selected $400 \times 400$  matrices $A$ with random $\mathcal{N}(0,1)$ (gaussian mean $0$ and variation $1$) entries and modified their singular values. We then generated a $20 \times 20$ matrices $X$ of rank $5$ by sampling $20 \times 5$ matrices $U$ and $V$ with $\mathcal{N}(0,1)$ entries and computed $X = UV^T$. The measurement vector $\bb$ was created by computing $\bb = \A X+{\bm \epsilon}$, where 
${\bm \epsilon}$ is $\mathcal{N}(0,\sigma^2)$ for varying noise level $\sigma$ between $0$ and $1$.
In Figure~\ref{fig:syntres} (a) we plotted the measurement fit $\|\A X -\bb \|$ versus the noise level $\sigma$ for the solutions obtained with \eqref{eq:relaxation} and \eqref{eq:nuclearrelax}.
Note that since the formulation \eqref{eq:nuclearrelax} does not directly specify the rank of the sought matrix we iteratively searched for the smallest value of $\mu$ that gives the correct rank,
using a bisection scheme. The reason for choosing the smallest $\mu$ is that this reduces the shrinking bias to a minimum while it still gives the correct rank.

In Figure~\ref{fig:syntres} (b) we computed the $Z$ matrix and plotted the fraction of problem instances where its singular values fulfilled $z_{r+1} < (1-2 \delta)z_{r}$, with $\delta = 0.2$.
For these instances the obtained stationary points are also globally optimal according to our main results.

In Figure~\ref{fig:syntres} (c) we did the same experiment as in (a) but with an under determined $A$ of size $300 \times 400$. It is known \cite{recht-etal-siam-2010} that if $A$ is $p \times mn$ and the elements of $A$ are drawn from $\mathcal{N}(0,\frac{1}{p})$ then $\A$ fulfills \eqref{eq:RIP} with high probability.
The exact value of $\delta_q$ is however difficult to determine and therefore we are not able to verify optimality in this case.

\subsection{Non-Rigid Structure from Motion}

\begin{figure*}
\begin{center}
\def\w{9mm}
\begin{tabular}{|c|c|c|c|}
\hline
\includegraphics[width=\w]{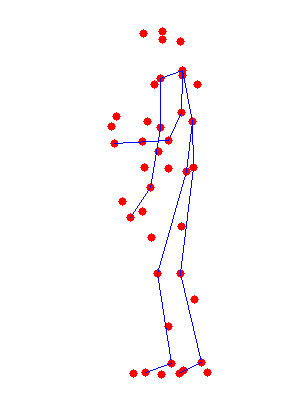}
\includegraphics[width=\w]{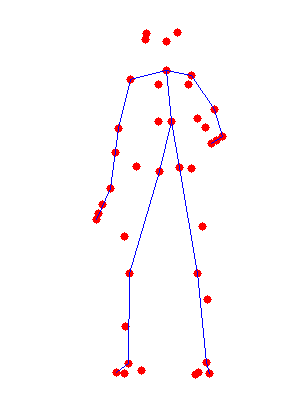}
\includegraphics[width=\w]{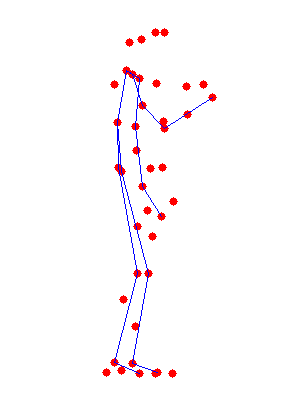}
\includegraphics[width=\w]{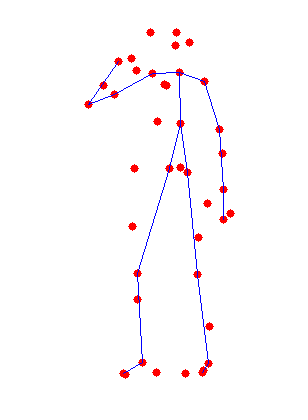}&
\includegraphics[width=\w]{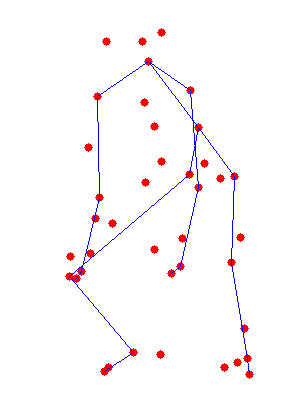}
\includegraphics[width=\w]{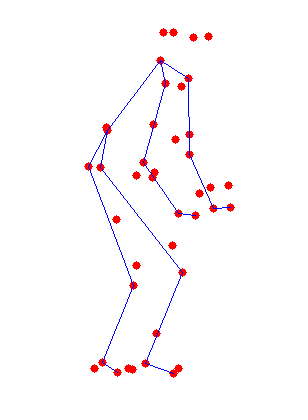}
\includegraphics[width=\w]{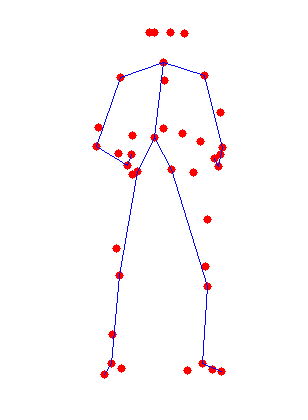}
\includegraphics[width=\w]{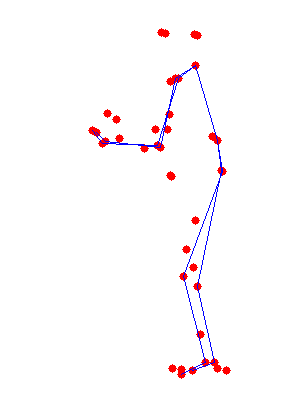}&
\includegraphics[width=\w]{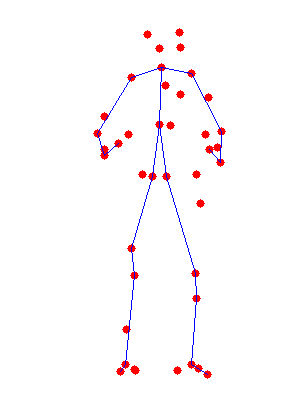}
\includegraphics[width=\w]{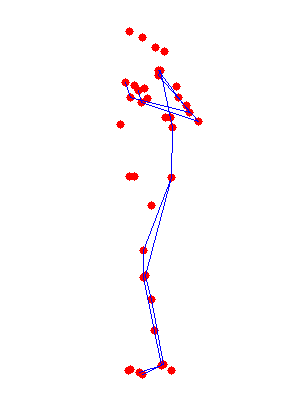}
\includegraphics[width=\w]{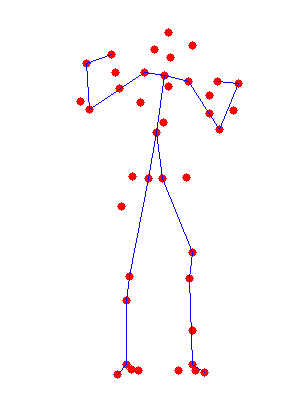}
\includegraphics[width=\w]{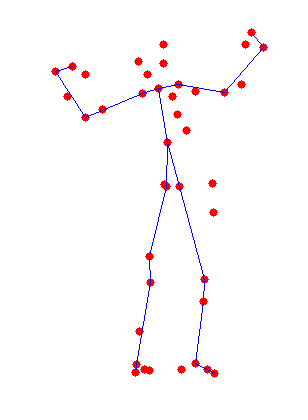}&
\includegraphics[width=\w]{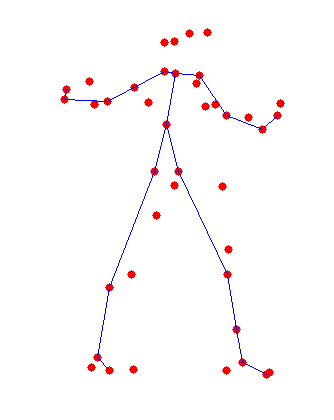}
\includegraphics[width=\w]{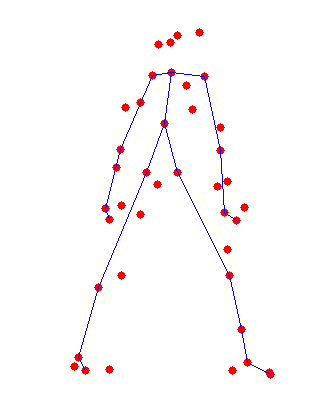}
\includegraphics[width=\w]{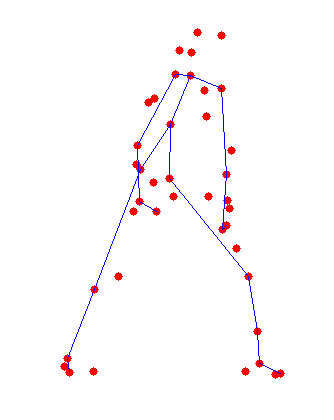}
\includegraphics[width=\w]{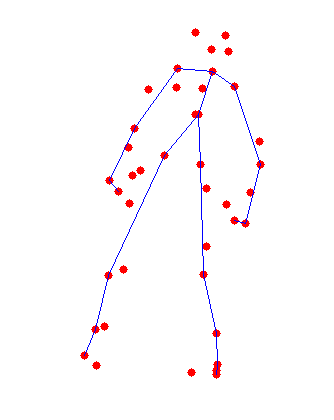}\\
\emph{Drink} & \emph{Pick-up} &
\emph{Stretch} & \emph{Yoga} \\
\hline
\end{tabular}
\end{center}
\caption{Four images from each of the MOCAP data sets.}
\label{fig:mocapshapes}
\begin{center}
\def\w{43mm}
\includegraphics[width=\w]{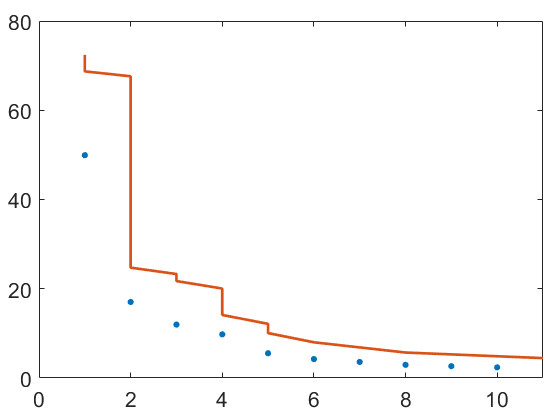}
\includegraphics[width=\w]{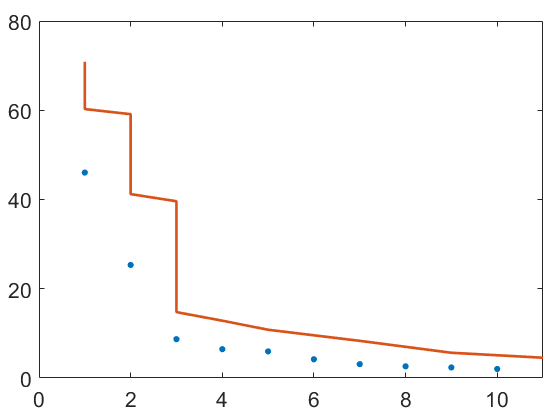}
\includegraphics[width=\w]{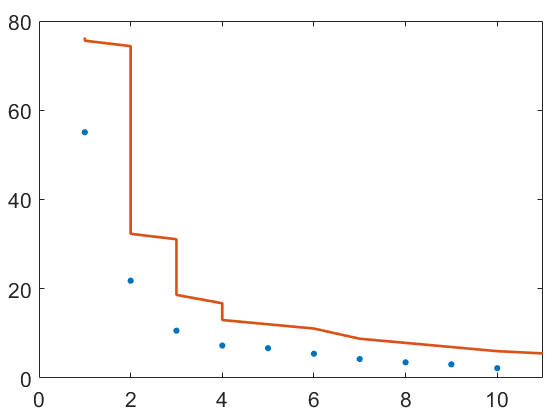}
\includegraphics[width=\w]{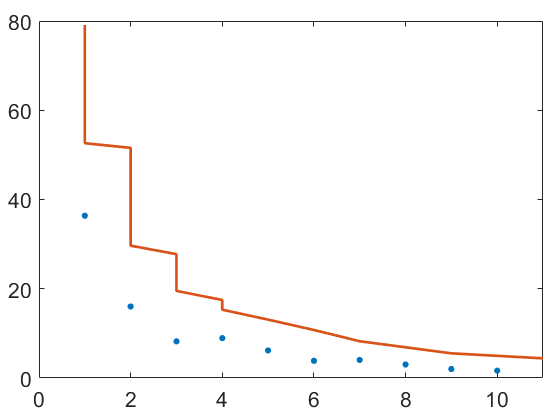}
\end{center}
\caption{Results obtained with \eqref{eq:deformobjrelax} (blue dots) and \eqref{eq:deformobjnuclear} (orage curve) for the four sequences.
Data fit $\|R X - M\|_F$ (y-axis) versus $\rank(X^\#)$ (x-axis).}
\label{fig:deformdatafit}
\begin{center}
\def\w{43mm}
\includegraphics[width=\w]{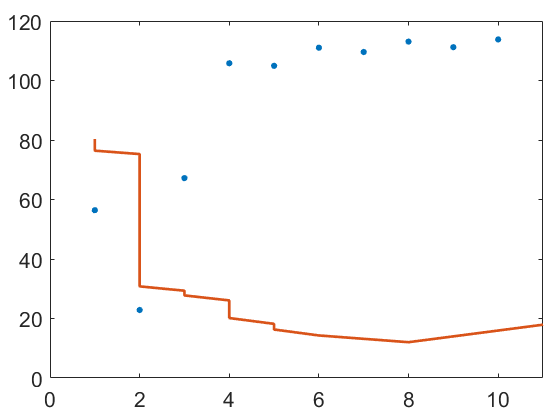}
\includegraphics[width=\w]{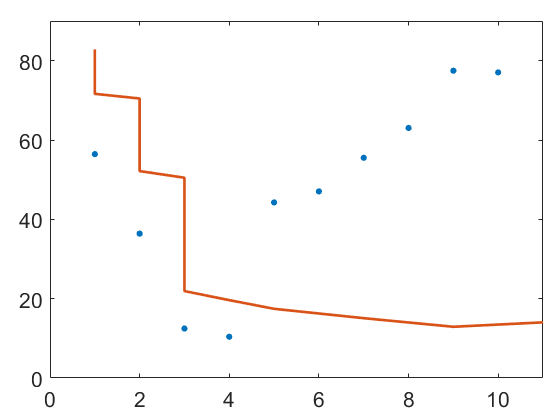}
\includegraphics[width=\w]{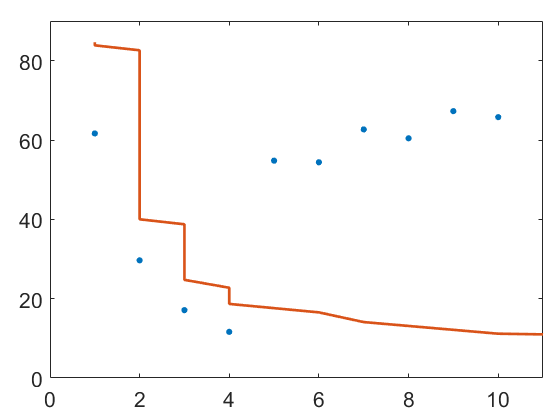}
\includegraphics[width=\w]{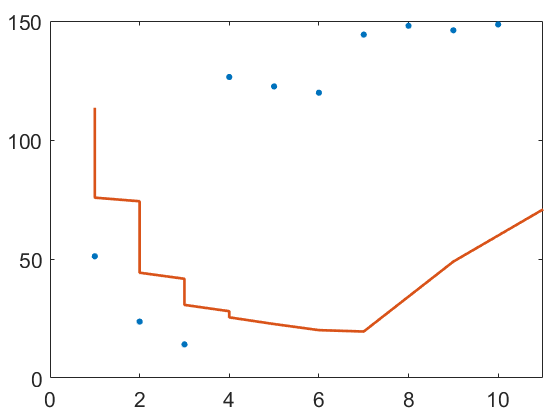}
\end{center}
\caption{Results obtained with \eqref{eq:deformobjrelax} (blue dots) and \eqref{eq:deformobjnuclear} (orage curve) for the four sequences.
Distance to ground truth $\|X - X_{gt}\|_F$ (y-axis) versus $\rank(X^\#)$ (x-axis).}
\label{fig:deformgtdist}
\begin{center}
\def\w{43mm}
\includegraphics[width=\w]{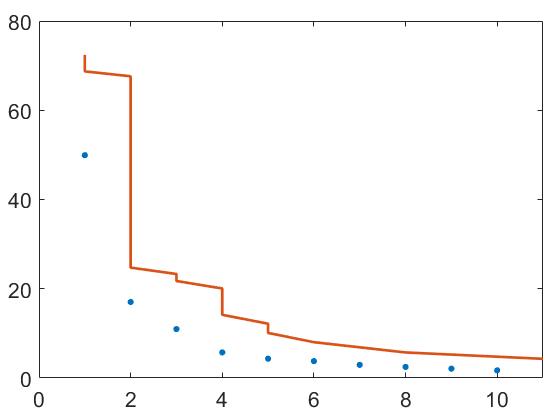}
\includegraphics[width=\w]{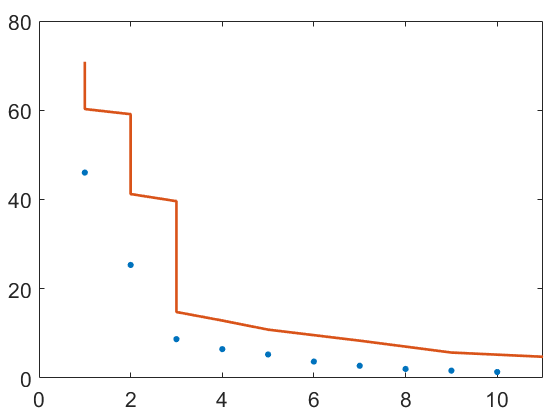}
\includegraphics[width=\w]{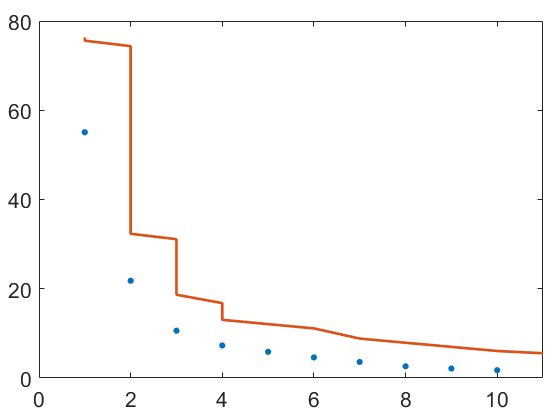}
\includegraphics[width=\w]{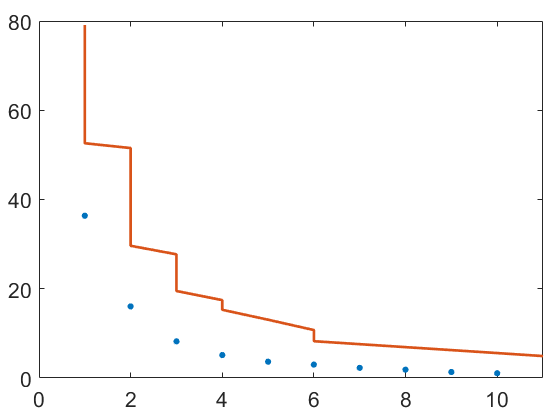}
\end{center}
\caption{Results obtained with \eqref{eq:deformobjrelaxD} (blue dots) and \eqref{eq:deformobjnuclearD} (orage curve) for the four sequences.
Data fit $\|R X - M\|_F$ (y-axis) versus $\rank(X^\#)$ (x-axis).}
\label{fig:deformdatafitD}
\begin{center}
\def\w{43mm}
\includegraphics[width=\w]{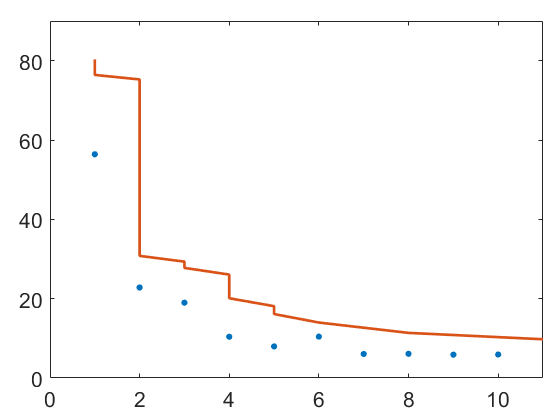}
\includegraphics[width=\w]{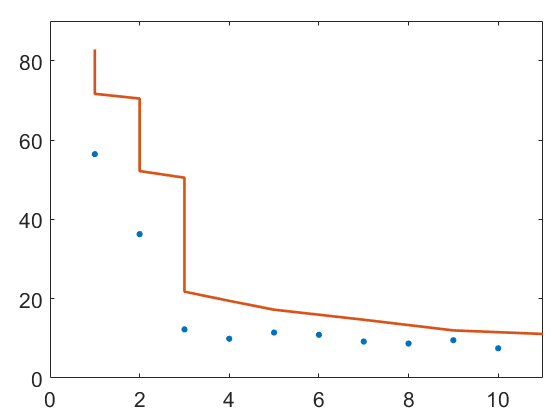}
\includegraphics[width=\w]{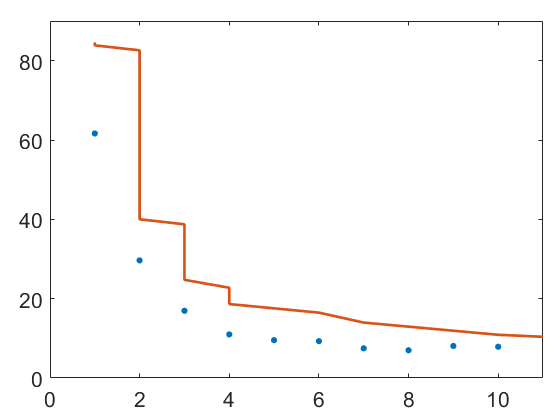}
\includegraphics[width=\w]{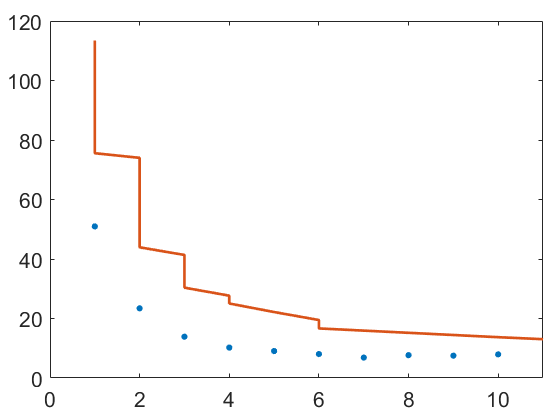}
\end{center}
\caption{Results obtained with \eqref{eq:deformobjrelaxD} (blue dots) and \eqref{eq:deformobjnuclearD} (orage curve) for the four sequences.
Distance to ground truth $\|X - X_{gt}\|_F$ (y-axis) versus $\rank(X^\#)$ (x-axis) is plotted for various regularization strengths.}
\label{fig:deformgtdistD}
\end{figure*}

In this section we consider the problem of Non-Rigid Structure from Motion.
We follow the aproach of Dai. \etal \cite{dai-etal-ijcv-2014} and let
\begin{equation}
X = 
\left[
\begin{array}{c}
X_1 \\
Y_1 \\
Z_1 \\
\vdots \\
X_F \\
Y_F \\
Z_F \\
\end{array}
\right] \text{ and }
X^\# = 
\left[
\begin{array}{ccc}
X_1 & Y_1 & Z_1 \\
\vdots & \vdots & \vdots \\
X_F &
Y_F &
Z_F 
\end{array}
\right],
\end{equation}
where $X_i$,$Y_i$,$Z_i$ are $1 \times m$ matrices containing the $x$-,$y$- and $z$-coordinates of the tracked points in images $i$.
Under the assumption of an orthographic camera the projection of the $3D$ points can be modeled using $M = R X$,
where $R$ is a $2F \times 3F$ block diagonal matrix with $2 \times 3$ blocks $R_i$, consisting of two orthogonal rows that encode the camera orientation in image $i$. The resulting $2F \times m$ measurement matrix $M$ consists of the $x$- and $y$-image coordinates or the tracked points. Under the assumption of a linear shape basis model \cite{bregler-etal-cvpr-2000} with $r$ deformation modes, the matrix $X^\#$ can be factorized into $X^\# = CB$, where the $r \times 3m$ matrix $B$ contain the basis elements. 
It is clear that such a factorization is possible when $X^\#$ is of rank $r$. We therefore search for the matrix $X^\#$ of rank $r$ that minimizes the residual error $\|PX-M\|_F^2$.

The linear operator defined by $\A (X^\#) = RX$ does by itself not obey \eqref{eq:RIP} since there are typically low rank matrices in its nullspace.
This can be seen by noting that if $N_i$ is the $3 \times 1$ vector perpendicular to the two rows of $R_i$, that is $R_i N_i = 0$ then
\begin{equation}
\left[
\begin{array}{c}
X_i \\
Y_i \\
Z_i \\
\end{array}
\right] = N_i C_i,
\end{equation}
where $C_i$ is any $1\times m$ matrix, is in the null space of $R_i$. Therefore any matrix of the form
\begin{equation}
N(C) = \left[  
\begin{array}{ccc}
n_{11} C_1 & n_{21} C_1 & n_{31}C_1  \\
n_{12} C_2 & n_{22} C_2 & n_{32}C_2 \\
\vdots & \vdots & \vdots \\
n_{1F} C_F & n_{2F} C_F & n_{3F}C_F \\
\end{array}
\right],
\end{equation}
where $n_{ij}$ are the elements of $N_i$, vanishes under $\A$.
Setting everything but the first row of $N(C)$ to zero shows that there is a matrix of rank $1$ in the null space of $\A$.
Moreover, if the rows of the optimal $X^\#$ spans such a matrix it will not be unique since we may add $N(C)$ without affecting the projections or the rank.

In Figure~\ref{fig:deformdatafit} we compare the two relaxations
\begin{equation}
\reg_r (X^\#) + \|R X - M\|^2_F
\label{eq:deformobjrelax}
\end{equation}
and
\begin{equation}
\mu \|X^\#\|_* + \|R X - M\|^2_F
\label{eq:deformobjnuclear}
\end{equation}
on the four MOCAP sequences displayed in Figure~\ref{fig:mocapshapes}, obtained from \cite{dai-etal-ijcv-2014}.
These consist of real motion capture data and therefore the ground truth solution is only approximatively of low rank. 

In Figure~\ref{fig:deformdatafit} we plot the rank of the obtained solution versus the datafit $\|R X - M\|^2_F$. 
Since \eqref{eq:deformobjnuclear} does not allow us to directly specify the rank of the sought matrix,
we solved the problem for $50$ values of $\mu$ between $1$ and $100$ (orange curve) and computed the resulting rank and datafit.
Note that even if a change of $\mu$ is not large enough to change the rank of the solution it does affect the non-zero singular values.
To achieve the best result for a specific rank with \eqref{eq:deformobjnuclear} we should select the smallest $\mu$ that gives the correct rank.
Even though \eqref{eq:RIP} does not hold, the relaxation \eqref{eq:deformobjrelax} consistently gives better data fit with lower rank than \eqref{eq:deformobjnuclear}. Figure~\ref{fig:deformgtdist} also shows the rank versus the distance to the ground truth solution.
For high rank the distance is typically larger for \eqref{eq:deformobjrelax} than \eqref{eq:deformobjnuclear}. 
A feasible explanation is that when the rank is high it is more likely that the row space of $X^\#$ contains a matrix of the type $N(C)$.
Loosely speaking, when we allow too complex deformations it becomes more difficult to uniquely recover the shape.
The nuclear norm's built in bias to small solutions helps to regularize the problem when the rank constraint is not discriminative enough.

One way to handle the null space of $\A$ is to add additional regularizes that penalize low rank matrices of the type $N(C)$. 
Dai \etal \cite{dai-etal-ijcv-2014} suggested to use the derivative prior $\|DX^\#\|_F^2$, where the matrix $D:\mathbb{R}^F \rightarrow \mathbb{R}^{F-1}$ is a first order difference operator.
The nullspace of $D$ consists of matrices that are constant in each column. 
Since this implies that the scene is rigid it is clear that $N(C)$ is not in the nullspace of $D$.
We add this term and compare
\begin{equation}
\reg_r(X^\#) + \|R X - M\|^2_F +\|DX^\#\|_F^2
\label{eq:deformobjrelaxD}
\end{equation}
and
\begin{equation}
\mu \|X^\#\|_* + \|R X - M\|^2_F+ \|DX^\#\|_F^2.
\label{eq:deformobjnuclearD}
\end{equation}
Figures~\ref{fig:deformdatafitD} and~\ref{fig:deformgtdistD} show the results. 
In this case both the data fit and the distance to the ground truth is consistently better with \eqref{eq:deformobjrelaxD} than \eqref{eq:deformobjnuclearD}. When the rank increases most of the regularization comes from the derivative prior leading to both methods providing similar results.

\section{Conclusions}
In this paper we studied the local minima of a non-convex rank regularization approach. 
Our main theoretical result shows that if a RIP property holds then there is often a unique local minimum. 
Since the proposed relaxation \eqref{eq:relaxation} and the original objective \eqref{eq:orgprobl} is shown 
to have the same global minimizers if $\|\A\| \leq 1$ in \cite{carlsson2016convexification} this result is also relevant for the original discontinuous problem. Our experimental evaluation shows that the proposed approach often gives
better solutions than standard convex alternatives, even
when the RIP constraint does not hold.

{\small
\bibliographystyle{ieee}

}

\end{document}